\documentclass[11pt]{amsart}

\usepackage{amsmath,amssymb,amsthm}
\usepackage{mathtools}
\usepackage{enumitem}

\theoremstyle{plain}
\newtheorem{theorem}{Theorem}[section]

\newtheorem{corollary}[theorem]{Corollary}

\theoremstyle{definition}

\theoremstyle{remark}
\newtheorem{remark}[theorem]{Remark}

\newcommand{\Fq}{\mathbb{F}_q}
\newcommand{\Fqstar}{\mathbb{F}_q^*}

\title[permutation trinomials and complete permutation polynomials]{On permutation trinomials and complete permutation polynomials
  via fiber criteria over finite fields }

\author{Chahrazade Bouyacoub, Asmae El-Baz and Omar Kihel}

\subjclass[2020]{Primary 11T06; Secondary 11T71, 12E20}
\keywords{Permutation polynomials, complete permutation polynomials,
  finite fields, Zieve's theorem, AGW criterion}

\date{\today}

\begin{document}

\begin{abstract}
We give new, short  proofs of recent permutation polynomial
results of Bousalmi, Bayad, and Derbal by reducing the verification to
explicit computations on a three-element multiplicative subgroup via
Zieve's fiber criterion.  Building on this approach, we develop a general
framework---combining Zieve's theorem with the AGW criterion---for
constructing complete permutation polynomials over finite fields through
a fiber decomposition over the cube roots of unity.  A scalar
specialization of the criterion yields families that are easy to produce
and verify.  We illustrate the construction with concrete examples and
show through counterexamples that the underlying arithmetic conditions
are sharp.
\end{abstract}

\maketitle

\section{Introduction}\label{sec:intro}

Permutation polynomials over finite fields---polynomials
$f\in\Fq[X]$ that induce bijections on~$\Fq$---have attracted
sustained attention since the foundational work of Hermite and Dickson
in the nineteenth century.  Their study lies at the crossroads of number
theory, combinatorics, and algebra, and has gained further momentum
through applications to coding theory, cryptography, and the design of
pseudorandom sequences.  A natural strengthening is the notion of a
\emph{complete permutation polynomial} (CPP): a polynomial~$f$ such
that both~$f$ and~$f+X$ permute~$\Fq$.  Complete permutation
polynomials impose a substantially more rigid condition and are
correspondingly harder to construct.

Among the most effective modern tools for establishing the permutation
property are two reduction principles.  The first, due to Zieve~\cite{Zieve2009},
applies to polynomials of the form
$f(X)=X^r h\!\left(X^{(q-1)/d}\right)$ and reduces the permutation
question over~$\Fq$ to two conditions: a coprimality requirement on~$r$
and a permutation test on the small multiplicative subgroup~$\mu_d$.
The second, the AGW criterion of Akbary, Ghioca, and Wang~\cite{AGW2011},
provides a general fiber-wise framework: it decomposes a bijection on a
finite set into a permutation of fibers together with bijectivity on
each individual fiber.  Both tools have become standard in the
construction of new permutation families.  In this direction, Ayad, Belghaba, and
Kihel~\cite{AyadBelghabaKihel2014,AyadBelghabaKihel2015} studied
permutation binomials $ax^n+x^m$ over finite fields, establishing
non-existence results under sharp arithmetic conditions on the
parameter $d=\gcd(n-m,q-1)$ and bounding the size of the base
field in terms of~$d$ alone.

In 2021, Bousalmi, Bayad, and Derbal~\cite{BBD2021} presented explicit
families of permutation trinomials of the form
$X^r(X^{2(q-1)/3}+\alpha X^{(q-1)/3}+\beta)$ over~$\Fq$ with
$q\equiv 1\pmod{3}$, parametrized either by a cube root of unity or by
a field element satisfying a simple arithmetic constraint (Theorems~3
and~4 of~\cite{BBD2021}).  Their proofs rely on direct evaluation and case
analysis.  Our first contribution is to give short and transparent
alternative proofs of these results by a systematic application of
Zieve's criterion with $d=3$: the verification reduces in each case to
an explicit computation on the three-element group~$\mu_3$.

Our second, and main, contribution concerns complete permutation
polynomials.  We establish a general criterion, based on the AGW
framework, for polynomials of the form
$f(X)=X^r c(X^{(q-1)/3})$ to be CPPs of~$\Fq$ when
$q\equiv 1\pmod{9}$.  The criterion decomposes into four verifiable
conditions---a coprimality and non-vanishing hypothesis, fiber
injectivity, a well-definedness condition on the induced map, and a
permutation test on~$\mu_3$---and unifies the treatment of a range of
trinomial shapes.  We then isolate a \emph{scalar fiber specialization}
that arises when $r\equiv 1\pmod{(q-1)/3}$: in this regime, the fiber
maps become scalar multiplications and the four conditions collapse to
simple non-vanishing and permutation checks.  The resulting family
criterion is easy to apply and produces explicit CPP families.  Finally,
we show through concrete counterexamples that the congruence
$q\equiv 1\pmod{9}$ is essential for our construction, in the sense that
natural parameter choices fail when only $q\equiv 1\pmod{3}$ is assumed.

\section{Preliminaries}\label{sec:prelim}

\subsection{Notation and standing assumptions}
Throughout, $\Fq$ denotes a finite field of order~$q$ with
$q\equiv 1\pmod{3}$, and we set
\[
  s \;:=\; \frac{q-1}{3}\,.
\]
The multiplicative group~$\Fqstar$ contains a unique subgroup of
order~$3$,
\[
  \mu_3 \;:=\; \{u\in\Fqstar : u^3=1\}
        \;=\; \{1,\,\omega,\,\omega^2\},
\]
where $\omega$ is a fixed primitive cube root of unity, so that
$\omega^2+\omega+1=0$.  The map $\pi\colon\Fqstar\to\mu_3$ defined by
$\pi(x)=x^s$ is a surjective group homomorphism with kernel
\[
  K \;:=\; \ker\pi \;=\; \{z\in\Fqstar : z^s=1\},
\]
a subgroup of order~$s$.  For each $u\in\mu_3$, the fiber
$\pi^{-1}(u)=tK$, where $t$ is any element of~$\Fqstar$ with $t^s=u$,
is a coset of~$K$ of cardinality~$s$.  The three fibers
$\pi^{-1}(1)$, $\pi^{-1}(\omega)$, $\pi^{-1}(\omega^2)$
partition~$\Fqstar$.

The following result, due to Zieve~\cite{Zieve2009}, reduces the question
of whether certain structured polynomials permute~$\Fq$ to a permutation
test on the small group~$\mu_d$.

\begin{theorem}[Zieve {\cite{Zieve2009}}]\label{thm:zieve}
Let $d$ and $r$ be positive integers with $d\mid q-1$, and let
$h\in\Fq[X]$.  Then
\[
  f(X) \;=\; X^r\, h\!\left(X^{(q-1)/d}\right)
\]
is a permutation polynomial of\/~$\Fq$ if and only if
\begin{enumerate}[label=\textup{(\arabic*)},leftmargin=2em]
\item $\gcd\!\left(r,\,\dfrac{q-1}{d}\right)=1$, and
\item the map $u\mapsto u^r\, h(u)^{(q-1)/d}$ is a permutation
  of\/~$\mu_d$.
\end{enumerate}
\end{theorem}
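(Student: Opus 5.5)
Set $s=(q-1)/d$ for this argument; it coincides with the $s$ of the standing notation when $d=3$. Let $\mu_d$ be the subgroup of $d$-th roots of unity in $\Fqstar$, and let $\pi\colon \Fqstar\to\mu_d$, $\pi(x)=x^{s}$. This is a surjective homomorphism whose kernel is the subgroup of order $s$, i.e.\ the $s$-th powers' complement structure $\{z: z^s=1\}$. The plan is to split $f$ over the fibers of $\pi$ and compare fiber by fiber. Since $f(0)=0$ (as $r\ge 1$), $f$ permutes $\Fq$ if and only if it permutes $\Fqstar$. If $h$ vanishes at some $u\in\mu_d$, then $f$ kills the whole fiber $\pi^{-1}(u)$, so $f$ is not a permutation; and in that case condition (2) also fails, because $0\notin\mu_d$. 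So I may assume $h(u)\neq 0$ for all $u\in\mu_d$. Then $f$ maps $\Fqstar$ into $\Fqstar$, and the identity
\[
f(x)^{s} \;=\; x^{rs}\,h(x^{s})^{s} \;=\; g\bigl(\pi(x)\bigr),\qquad g(u):=u^r h(u)^{s},
\]
shows that $\pi\circ f = g\circ\pi$ on $\Fqstar$, with $g\colon\mu_d\to\mu_d$. This is exactly the commutative square behind the AGW criterion.

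\emph{Sufficiency.} Assume (1) and (2). Suppose $f(x)=f(y)$ with $x,y\in\Fqstar$. Applying $\pi$ gives $g(\pi(x))=g(\pi(y))$, so $\pi(x)=\pi(y)=:u$ because $g$ is injective. Write $y=xz$ with $z^{s}=1$. Then $h(y^s)=h(x^s)=h(u)\neq 0$, and $f(x)=f(y)$ reduces to $z^r=1$. Together with $z^s=1$ and $\gcd(r,s)=1$ this forces $z=1$. Hence $f$ is injective on $\Fqstar$, and therefore a permutation of $\Fq$.

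\emph{Necessity.} Assume $f$ permutes $\Fq$; by the preliminary remark, $h$ has no zeros on $\mu_d$. Since $\pi$ is surjective and $\pi\circ f=g\circ\pi$ with $f$ bijective on $\Fqstar$, we get that $g\circ\pi$ is surjective onto $\mu_d$. Thus $g$ is surjective on the finite set $\mu_d$, hence bijective, which is (2). For (1), suppose $e=\gcd(r,s)>1$. Then there is $z\neq 1$ with $z^e=1$; since $e\mid s$ we have $z^s=1$, so $z\in\ker\pi$. Then $f(xz)=x^r z^r h(x^s)=f(x)$ for any $x\in\Fqstar$, contradicting injectivity. Hence $\gcd(r,s)=1$.

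The argument is elementary, and the only point needing care is the zero-handling: one has to check that $h$ vanishing on $\mu_d$ is excluded on both sides, so that $g$ genuinely lands in $\mu_d$ and the fiber equation $z^r=1$ is legitimate. I expect no other obstacle.
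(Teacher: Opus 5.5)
The paper does not prove this statement: it is quoted from Zieve's paper and used as a black box, and only the sufficiency direction with $d=3$ is ever used. So there is no in-paper argument to compare against. Your proof is correct and complete, and it establishes both directions.

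It is essentially the fiber mechanism the paper later runs through the AGW criterion in Theorem~\ref{thm:cpp-general}, applied to $f$ itself rather than to $F=f+X$. On a fiber $\pi^{-1}(u)=t\ker\pi$ one has $f(tz)=t^r h(u)\,z^r$. The AGW fiber-injectivity requirement of Theorem~\ref{thm:agw} is therefore exactly injectivity of $z\mapsto z^r$ on $\ker\pi$, i.e.\ $\gcd(r,s)=1$. Bijectivity of the induced map on $\mu_d$ is exactly condition~(2). Your direct argument avoids invoking AGW and is slightly more economical. Your preliminary treatment of zeros of $h$ on $\mu_d$ is the right first step, and AGW would need it anyway to have $f(\Fqstar)\subseteq\Fqstar$.

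Three small points to tidy:
\begin{itemize}
\item The inclusion $g(\mu_d)\subseteq\mu_d$ deserves one explicit line, e.g.\ $g(u)^d=u^{rd}h(u)^{q-1}=1$ once $h(u)\neq 0$.
\item The existence of $z\neq 1$ with $z^e=1$ uses that $\Fqstar$ is cyclic and $e\mid s\mid q-1$.
\item The phrase ``the $s$-th powers' complement structure'' is meaningless and should be deleted; the kernel is simply $\{z\in\Fqstar : z^s=1\}$.
\end{itemize}
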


In this paper we apply Theorem~\ref{thm:zieve} exclusively with $d=3$,
so that condition~(1) becomes $\gcd(r,s)=1$ and condition~(2) is a
permutation test on the three-element group~$\mu_3$.

\begin{theorem}[AGW criterion {\cite[Lemma~1.2]{AGW2011}}]
\label{thm:agw}
Let $A$, $S$, $\bar S$ be finite sets with $|S|=|\bar S|$, and let
$g\colon A\to A$, $\bar g\colon S\to\bar S$,
$\lambda\colon A\to S$, $\bar\lambda\colon A\to\bar S$ be maps
satisfying
\[
  \bar\lambda\circ g \;=\; \bar g\circ\lambda.
\]
If both $\lambda$ and $\bar\lambda$ are surjective, then the following
are equivalent:
\begin{enumerate}[label=\textup{(\roman*)},leftmargin=2em]
\item $g$ is a bijection on~$A$;
\item $\bar g$ is a bijection from $S$ to $\bar S$, and $g$ is injective
  on $\lambda^{-1}(s)$ for each $s\in S$.
\end{enumerate}
\end{theorem}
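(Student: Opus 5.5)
We prove the AGW criterion directly from the commutativity relation $\bar\lambda\circ g=\bar g\circ\lambda$ and a counting argument. The key observation is that this relation makes $g$ carry each fiber $\lambda^{-1}(s)$ into the fiber $\bar\lambda^{-1}(\bar g(s))$. Indeed, if $\lambda(a)=s$, then $\bar\lambda(g(a))=\bar g(s)$. Everything else follows by comparing cardinalities of these fibers. Both $A=\bigsqcup_{s\in S}\lambda^{-1}(s)$ and $A=\bigsqcup_{t\in\bar S}\bar\lambda^{-1}(t)$ are partitions into nonempty pieces, since $\lambda$ and $\bar\lambda$ are surjective.

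For (i)$\Rightarrow$(ii), injectivity of $g$ on each fiber $\lambda^{-1}(s)$ is immediate. For surjectivity of $\bar g$, take $t\in\bar S$ and pick $b\in\bar\lambda^{-1}(t)$, which exists because $\bar\lambda$ is surjective. Since $g$ is onto, write $b=g(a)$. Then $\bar g(\lambda(a))=\bar\lambda(g(a))=t$. Thus $\bar g$ is surjective, and because $|S|=|\bar S|$ are finite, $\bar g$ is a bijection.

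For (ii)$\Rightarrow$(i), it suffices by finiteness of $A$ to show that $g$ is injective. Suppose $g(a)=g(a')$, and set $s=\lambda(a)$, $s'=\lambda(a')$. Applying $\bar\lambda$ gives $\bar g(s)=\bar\lambda(g(a))=\bar\lambda(g(a'))=\bar g(s')$. Injectivity of $\bar g$ then forces $s=s'$. Now $a$ and $a'$ lie in the same fiber $\lambda^{-1}(s)$, where $g$ is injective by hypothesis, so $a=a'$.

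Neither step is a serious obstacle. The only point needing care is the direction (i)$\Rightarrow$(ii), where surjectivity of $\bar\lambda$ is genuinely used to conclude that $\bar g$ is onto. The hypothesis $|S|=|\bar S|$ is used to upgrade a surjection or injection of $\bar g$ to a bijection. Finiteness of $A$ is used to upgrade injectivity of $g$ to bijectivity.
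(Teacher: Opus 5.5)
Your proof is correct and complete. The paper gives no proof of this statement; it simply quotes Lemma~1.2 of Akbary--Ghioca--Wang. Your argument is the standard one:
\begin{itemize}
\item The relation $\bar\lambda\circ g=\bar g\circ\lambda$ sends $\lambda^{-1}(s)$ into $\bar\lambda^{-1}(\bar g(s))$.
\item For (i)$\Rightarrow$(ii), surjectivity of $g$ and of $\bar\lambda$ makes $\bar g\circ\lambda$ onto $\bar S$. Hence $\bar g$ is onto, and therefore bijective since $|S|=|\bar S|$.
\item For (ii)$\Rightarrow$(i), injectivity of $\bar g$ confines any collision $g(a)=g(a')$ to a single $\lambda$-fiber, where fiberwise injectivity gives $a=a'$. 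Finiteness of $A$ then upgrades injectivity of $g$ to bijectivity.
\end{itemize}

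Two small remarks. First, the ``counting argument comparing cardinalities of fibers'' you announce at the outset never actually enters. The proof is pure element-chasing, so you can drop that framing and the partition remark. Second, surjectivity of $\lambda$ is never used, and surjectivity of $\bar\lambda$ is used only in (i)$\Rightarrow$(ii). This is harmless: the paper invokes only (ii)$\Rightarrow$(i), in the proof of Theorem~\ref{thm:cpp-general}, and your argument shows that direction needs neither surjectivity hypothesis.
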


In this paper we apply the AGW criterion with $A=\Fqstar$,
$S=\bar S=\mu_3$, and $\lambda=\bar\lambda=\pi\colon x\mapsto x^s$.

\section{Short proofs of the Bousalmi--Bayad--Derbal theorems}
\label{sec:bbd}

In this section we give new proofs of Theorems~3 and~4 of~\cite{BBD2021}
by direct application of Zieve's Theorem~\ref{thm:zieve} with $d=3$.
In each case, the verification reduces to an explicit computation
on~$\mu_3=\{1,\omega,\omega^2\}$.

\begin{theorem}[Theorem~3 of {\cite{BBD2021}}]\label{thm:bbd3}
Let $q\equiv 1\pmod{3}$ and let $r$ be a positive integer with
$\gcd(r,q-1)=1$.
\begin{enumerate}[label=\textup{(\arabic*)},leftmargin=2em]
\item For every $\delta\in\mu_3$ such that
  $(3\delta^2+1)^s=1$ in~$\Fq$, the polynomial
  \[
    f(X) \;=\; X^r\!\left(X^{2s}+\delta\, X^s+\delta^2+1\right)
  \]
  is a permutation polynomial of\/~$\Fq$.
\item For any $\gamma\in\Fq\setminus\{1,-2\}$ such that
  $\left(\frac{\gamma+2}{\gamma-1}\right)^s=1$ in~$\Fq$, the polynomial
  \[
    f(X) \;=\; X^r\!\left(X^{2s}+X^s+\gamma\right)
  \]
  is a permutation polynomial of\/~$\Fq$.
\end{enumerate}
\end{theorem}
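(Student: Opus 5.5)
The plan is to apply Theorem~\ref{thm:zieve} with $d=3$ to $h(X)=X^2+\delta X+\delta^2+1$ in part~(1) and to $h(X)=X^2+X+\gamma$ in part~(2). In both parts $f(X)=X^r h(X^s)$ has the required shape. Condition~(1) of Theorem~\ref{thm:zieve} asks for $\gcd(r,s)=1$, which is immediate because $s\mid q-1$ and $\gcd(r,q-1)=1$. Since $3\mid q-1$, we also get $\gcd(r,3)=1$. Hence $u\mapsto u^r$ is a permutation of $\mu_3$, and this fact is used at the end of both parts. The whole proof then comes down to evaluating $h$ at the three points $1,\omega,\omega^2$ and checking that $h(u)^s$ is the same element of $\mu_3$ for every $u\in\mu_3$.

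For part~(1), the key identity is that for distinct $u,\delta\in\mu_3$,
\[
u^2+u\delta+\delta^2=\frac{u^3-\delta^3}{u-\delta}=0 .
\]
This gives $h(u)=1$ for the two elements $u\neq\delta$. For $u=\delta$ we get $h(\delta)=3\delta^2+1$. The hypothesis $(3\delta^2+1)^s=1$ then gives $h(u)^s=1$ for all $u\in\mu_3$; in particular no value of $h$ on $\mu_3$ vanishes. Therefore the map in condition~(2) of Theorem~\ref{thm:zieve} is just $u\mapsto u^r$, which permutes $\mu_3$.

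For part~(2), we use $1+\omega+\omega^2=0$ to compute
\[
h(1)=\gamma+2,\qquad h(\omega)=h(\omega^2)=\gamma-1 .
\]
Set $c=(\gamma-1)^s$. Since $\gamma\neq1$, $c$ is a nonzero element with $c^3=1$, so $c\in\mu_3$. The hypothesis $\bigl(\tfrac{\gamma+2}{\gamma-1}\bigr)^s=1$ says exactly that $(\gamma+2)^s=c$. Hence $u^r h(u)^s=c\,u^r$ for every $u\in\mu_3$. This map is the composition of the permutation $u\mapsto u^r$ with multiplication by $c\in\mu_3$, so it permutes $\mu_3$, and Theorem~\ref{thm:zieve} gives the conclusion.

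There is no real obstacle here. The only points needing care are the vanishing identity $u^2+u\delta+\delta^2=0$ for distinct cube roots of unity in part~(1), and the observation in part~(2) that the hypothesis forces $h(u)^s$ to be constant on $\mu_3$. The condition $\gamma\neq-2$ is not actually needed: it is implied by the hypothesis, since $(\gamma+2)^s=c\neq0$.
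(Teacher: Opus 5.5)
Your proof is correct and follows the paper's route: Zieve's criterion with $d=3$, then evaluating $h$ on $\mu_3$ to show that $h(u)^s$ is constant there. The differences are only in packaging. In part~(1), your uniform identity $u^2+u\delta+\delta^2=0$ for distinct $u,\delta\in\mu_3$ replaces the paper's case-by-case table. In part~(2), writing the induced map as $c\,u^r$ with $c=(\gamma-1)^s\in\mu_3$ replaces the paper's pairwise-ratio check.
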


\begin{proof}
Since $\gcd(r,q-1)=1$ implies $\gcd(r,s)=1$, condition~(1) of
Theorem~\ref{thm:zieve} is satisfied in both parts.  It remains to
verify that $u\mapsto u^r h(u)^s$ permutes~$\mu_3$, where $h$ is
the appropriate quadratic polynomial.

\medskip
\noindent\textit{Part~(1).}\;
Define $h_\delta(X)=X^2+\delta X+\delta^2+1$.  We evaluate~$h_\delta$
on each element of~$\mu_3$, using the relation
$1+\omega+\omega^2=0$ throughout.

If $\delta=1$, then
\[
  h_1(1)=4, \qquad
  h_1(\omega)=\omega^2+\omega+2=1, \qquad
  h_1(\omega^2)=\omega+\omega^2+2=1.
\]
If $\delta=\omega$, then
\[
  h_\omega(1)=1+\omega+\omega^2+1=1, \qquad
  h_\omega(\omega)=\omega^2+\omega^2+\omega^2+1=3\omega^2+1, \qquad
  h_\omega(\omega^2)=1.
\]
If $\delta=\omega^2$, then
\[
  h_{\omega^2}(1)=1, \qquad
  h_{\omega^2}(\omega)=1, \qquad
  h_{\omega^2}(\omega^2)=3\omega+1.
\]
In each case, exactly one of the three values equals $3\delta^2+1$ and
the remaining two equal~$1$.  By hypothesis $(3\delta^2+1)^s=1$, so
$h_\delta(u)^s=1$ for every $u\in\mu_3$.  The induced map therefore
satisfies $u\mapsto u^r h_\delta(u)^s=u^r$ for all $u\in\mu_3$.  Since
$\gcd(r,3)=1$ (a consequence of $\gcd(r,q-1)=1$ and $3\mid q-1$), the
map $u\mapsto u^r$ is a permutation of the cyclic group~$\mu_3$.  By
Theorem~\ref{thm:zieve}, $f$ is a permutation polynomial of~$\Fq$.

\medskip
\noindent\textit{Part~(2).}\;
Define $h_\gamma(X)=X^2+X+\gamma$.  Evaluating on~$\mu_3$:
\[
  h_\gamma(1)=2+\gamma, \qquad
  h_\gamma(\omega)=\omega^2+\omega+\gamma=\gamma-1, \qquad
  h_\gamma(\omega^2)=\gamma-1.
\]
The three images of $u\mapsto u^r h_\gamma(u)^s$ on~$\mu_3$ are
therefore
\[
  (2+\gamma)^s, \qquad
  \omega^r(\gamma-1)^s, \qquad
  \omega^{2r}(\gamma-1)^s.
\]
We verify that these are pairwise distinct.  The hypothesis
$\left(\frac{\gamma+2}{\gamma-1}\right)^s=1$ gives
$(2+\gamma)^s=(\gamma-1)^s$.  The three pairwise ratios are
\[
  \frac{(2+\gamma)^s}{\omega^r(\gamma-1)^s}
  = \frac{1}{\omega^r}\,, \qquad
  \frac{\omega^r(\gamma-1)^s}{\omega^{2r}(\gamma-1)^s}
  = \frac{1}{\omega^r}\,, \qquad
  \frac{(2+\gamma)^s}{\omega^{2r}(\gamma-1)^s}
  = \frac{1}{\omega^{2r}}\,.
\]
Since $\gcd(r,3)=1$, neither $\omega^r$ nor $\omega^{2r}$ equals~$1$,
so all three ratios differ from~$1$.  The three images are pairwise
distinct, hence $u\mapsto u^r h_\gamma(u)^s$ permutes~$\mu_3$.  By
Theorem~\ref{thm:zieve}, $f$ is a permutation polynomial of~$\Fq$.
\end{proof}

\begin{theorem}[Theorem~4 of {\cite{BBD2021}}]\label{thm:bbd4}
Let $q\equiv 1\pmod{3}$, let $p=\mathrm{char}(\Fq)$, and let $r$ be a
positive integer with $\gcd(r,q-1)=1$.
\begin{enumerate}[label=\textup{(\arabic*)},leftmargin=2em]
\item If $q\equiv 1\pmod{6}$, then
  $f(X)=X^r\!\left(X^{2s}+X^s+\frac{p-1}{2}\right)$
  is a~PP of\/~$\Fq$.
\item If $p\equiv 1\pmod{3}$ and $q=p^{3k}$ with $k\ge1$, then
  $f(X)=X^r(X^{2s}+X^s+2)$ is a~PP of\/~$\Fq$.
\item If $p\ge5$, $p\equiv -1\pmod{3}$, and $q=p^{2k}$ with $k\ge1$,
  then $f(X)=X^r(X^{2s}+X^s+2)$ is a~PP of\/~$\Fq$.
\end{enumerate}
\end{theorem}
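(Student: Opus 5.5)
All three parts follow from part~(2) of Theorem~\ref{thm:bbd3}. Each polynomial has the shape $X^r(X^{2s}+X^s+\gamma)$ with $\gamma$ in the prime field: $\gamma=\frac{p-1}{2}$ in part~(1) and $\gamma=2$ in parts~(2) and~(3). The hypothesis $\gcd(r,q-1)=1$ is already assumed. So in each case I will check two things: that $\gamma\notin\{1,-2\}$, and that $\rho:=\frac{\gamma+2}{\gamma-1}$ satisfies $\rho^s=1$. Throughout, $p\neq 3$ because $q\equiv 1\pmod 3$.

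\emph{Part~(1).} The condition $q\equiv 1\pmod 6$ forces $p$ odd, so $\gamma=\frac{p-1}{2}=-\tfrac12$ in $\mathbb{F}_p$.
\begin{itemize}
\item[] If $-\tfrac12=1$ then $3=0$, and if $-\tfrac12=-2$ then $3=0$; both are excluded since $p\neq3$, so $\gamma\notin\{1,-2\}$.
\item[] We have $\gamma+2=\tfrac32$ and $\gamma-1=-\tfrac32$, so $\rho=-1$.
\item[] Since $6\mid q-1$, the exponent $s=(q-1)/3$ is even, hence $\rho^s=(-1)^s=1$.
\end{itemize}

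\emph{Parts~(2) and~(3).} Here $\gamma=2$, so $\gamma\neq1$ trivially, and $\gamma\neq-2$ because $p\neq2$. Also $\rho=4$. Since $2^s\in\mu_3$ and squaring is a bijection of $\mu_3$, we have $4^s=(2^s)^2=1$ if and only if $2^s=1$. So it suffices to show $2^s=1$, using that $2\in\mathbb{F}_p^*$.

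\emph{Part~(2).} Here $p\equiv1\pmod3$ and $q=p^{3k}$. Factor
\[
s=\frac{p-1}{3}\,\sum_{j=0}^{3k-1}p^j .
\]
Put $a=2^{(p-1)/3}$, which lies in $\mu_3$ because $2^{p-1}=1$. The Frobenius fixes $\mathbb{F}_p$, so $a^{p^j}=a$ for every $j$. Therefore $2^s=\prod_j a^{p^j}=a^{3k}=1$.

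\emph{Part~(3).} Here $p\equiv-1\pmod3$ and $q=p^{2k}$. Write
\[
p^{2k}-1=(p^2-1)\bigl(1+p^2+\dots+p^{2k-2}\bigr)=(p-1)(p+1)\bigl(1+p^2+\dots+p^{2k-2}\bigr).
\]
Since $3\mid p+1$, the exponent $s$ is an integer multiple of $p-1$. Hence $2^s=1$, because $2^{p-1}=1$.

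The only step needing care is this reduction of the exponent $s$ to a multiple of $p-1$ (or to the $(p-1)/3$ factorization in part~(2)). Once that is done, Theorem~\ref{thm:bbd3}(2) gives all three parts.
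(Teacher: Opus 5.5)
Your proof is correct and follows the paper's route: reduce each item to Theorem~\ref{thm:bbd3}(2), compute the ratio ($-1$ in item (1), $4$ in items (2)--(3)), and get $s$-th power $1$ from the evenness of $s$ or from $(p-1)\mid s$. Your Frobenius product in part (2) and your $p^2-1$ factorization in part (3) only repackage the paper's count of residues of $1+p+\cdots$ modulo $3$, and your explicit check that $\gamma\notin\{1,-2\}$ supplies a detail the paper leaves implicit.
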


\begin{proof}
Each item is a special case of Theorem~\ref{thm:bbd3}(2) with a
specific value of~$\gamma$: it suffices to verify that
$\left(\frac{\gamma+2}{\gamma-1}\right)^s=1$ under the given
hypotheses, after which the distinctness argument from the proof of
Theorem~\ref{thm:bbd3}(2) applies verbatim.

\medskip
\noindent\textit{Item~(1): $\gamma=(p-1)/2$, $q\equiv 1\pmod{6}$.}\;
We compute
\[
  \frac{\gamma+2}{\gamma-1}
  = \frac{p+3}{p-3}
  \equiv \frac{3}{-3} = -1 \pmod{p}.
\]
Since $q\equiv 1\pmod{6}$, the integer $s=(q-1)/3$ is even, hence
$(-1)^s=1$ in~$\Fq$.

\medskip
\noindent\textit{Item~(2): $\gamma=2$, $p\equiv 1\pmod{3}$,
$q=p^{3k}$.}\;
Here $(\gamma+2)/(\gamma-1)=4$.  We show that $p-1\mid s$.  Writing
\[
  \frac{q-1}{p-1} = 1+p+\cdots+p^{3k-1},
\]
this sum has $3k$ terms, each congruent to~$1\pmod{3}$ (since
$p\equiv 1\pmod{3}$), so the sum is divisible by~$3$.  Hence
$s=(q-1)/3$ is divisible by $p-1$.  Since $4\in\mathbb{F}_p^*$ and
$\mathbb{F}_p^*$ has order $p-1$, we obtain $4^s=1$.

\medskip
\noindent\textit{Item~(3): $\gamma=2$, $p\ge5$,
$p\equiv -1\pmod{3}$, $q=p^{2k}$.}\;
Again $(\gamma+2)/(\gamma-1)=4$ and we show $p-1\mid s$.  The sum
\[
  \frac{q-1}{p-1} = 1+p+\cdots+p^{2k-1}
\]
has $2k$ terms with alternating residues $1,-1,1,-1,\ldots$ modulo~$3$
(since $p\equiv -1\pmod{3}$).  The terms pair off, each pair
contributing $1+(-1)=0\pmod{3}$, so the sum is divisible by~$3$.  Hence
$p-1\mid s$ and $4^s=1$.
\end{proof}

\section{A general criterion for index-3 complete permutation
  polynomials}\label{sec:cpp}

Having established in Section~\ref{sec:bbd} that Zieve's criterion
provides an efficient test for the permutation property of
$f(X)=X^r c(X^s)$, we now turn to the more demanding question: when is
$f$ a \emph{complete} permutation polynomial, that is, when does
$F(X)=f(X)+X$ also permute~$\Fq$?  The structure of~$F$ precludes a
direct application of Zieve's theorem, since the additive term~$X$
breaks the multiplicative shape $X^r h(X^s)$.  Instead, we appeal to the
AGW criterion (Theorem~\ref{thm:agw}), which allows us to decompose the
bijectivity of~$F$ on~$\Fqstar$ into a permutation test on~$\mu_3$
together with injectivity on each fiber of the projection
$\pi\colon x\mapsto x^s$.

\begin{theorem}
\label{thm:cpp-general}
Let $q\equiv 1\pmod{3}$, let $s=(q-1)/3$, and let $r$ be a positive
integer.  Let $c\colon\mu_3\to\Fqstar$ be a function, extended
to~$\Fqstar$ via $c(x^s)$, and define
\[
  f(X) = X^r\, c(X^s), \qquad F(X) = f(X)+X.
\]
Assume the following conditions hold:
\begin{enumerate}[label=\textup{(G\arabic*)},leftmargin=3em]
\item\label{G1} $\gcd(r,s)=1$, $\gcd(r,3)=1$, and $c(u)^s=1$ for all
  $u\in\mu_3$.
\item\label{G2} For each $u\in\mu_3$ and each $t\in\Fqstar$ with
  $t^s=u$, defining $\beta_u:=c(u)\,t^{r-1}$, the map
  \[
    \varphi_u\colon K\longrightarrow\Fqstar, \qquad
    z\longmapsto z\!\left(1+\beta_u\, z^{r-1}\right),
  \]
  is injective.
\item\label{G3} For each $u\in\mu_3$, the value
  \[
    v(u) \;:=\; \left(1+\beta_u\, z^{r-1}\right)^s
  \]
  is independent of the choice of $z\in K$, and lies in~$\mu_3$.
\item\label{G4} The map $\bar\psi\colon\mu_3\to\mu_3$ defined by
  $\bar\psi(u)=u\cdot v(u)$ is a permutation of~$\mu_3$.
\end{enumerate}
Then $f$ is a complete permutation
polynomial of\/~$\Fq$.
\end{theorem}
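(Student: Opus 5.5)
We treat the two halves of the CPP property separately: $f$ permuting $\Fq$ will come from Theorem~\ref{thm:zieve}, and $F=f+X$ permuting $\Fq$ from the AGW criterion (Theorem~\ref{thm:agw}).

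\textbf{Step 1: $f$ permutes $\Fq$.} Regard $c$ as a polynomial $h$ of degree $\le 2$ interpolating $c$ on $\mu_3$, so that $f$ agrees with $X^r h(X^s)$ on $\Fqstar$; both vanish at $0$ since $r\ge1$. By \ref{G1}, $\gcd(r,s)=1$. The induced map on $\mu_3$ is $u\mapsto u^r c(u)^s=u^r$, again by \ref{G1}. Since $\gcd(r,3)=1$, this map permutes $\mu_3$. Theorem~\ref{thm:zieve} with $d=3$ then gives that $f$ is a PP.

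\textbf{Step 2: $F$ permutes $\Fqstar$.} Since $F(0)=0$, it suffices to show that $F$ maps $\Fqstar$ bijectively onto itself. First we check that $F$ does not vanish on $\Fqstar$. Write $x=tz$ with $t^s=u$ and $z\in K$. Then $c(x^s)=c(u)$, because $z^s=1$. Hence
\[
F(tz)=tz+t^rz^rc(u)=t\,z\bigl(1+\beta_u z^{r-1}\bigr)=t\,\varphi_u(z).
\]
Because $\varphi_u$ takes values in $\Fqstar$ by \ref{G2}, $F(x)\neq 0$. We now apply Theorem~\ref{thm:agw} with $A=\Fqstar$, $g=F|_{\Fqstar}$, $S=\bar S=\mu_3$, $\lambda=\bar\lambda=\pi$, and $\bar g=\bar\psi$. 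Surjectivity of $\pi$ is recorded in the preliminaries.

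\textbf{Step 3: the commuting relation and fiber injectivity.} Compute
\[
F(tz)^s=t^s z^s\bigl(1+\beta_u z^{r-1}\bigr)^s=u\,v(u).
\]
Here \ref{G3} ensures that $v(u)$ is a well-defined element of $\mu_3$, independent of $z$. So $\pi\circ F=\bar\psi\circ\pi$ on $\Fqstar$, and $\bar\psi$ is a bijection by \ref{G4}. For injectivity on the fiber $\pi^{-1}(u)=tK$: the map $tK\to\Fqstar$ is $tz\mapsto t\varphi_u(z)$, which is injective because $\varphi_u$ is by \ref{G2}. Theorem~\ref{thm:agw} then yields that $F$ is a bijection of $\Fqstar$, hence of $\Fq$, and so $f$ is a CPP.

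\textbf{Main subtlety.} The delicate point is bookkeeping about the choice of $t$. Both $\beta_u$ and $v(u)$ depend a priori on the representative $t$. For the commuting relation this is harmless: for any $t$ in the fiber, $u\,v(u)$ equals $F(tz)^s$, and the hypotheses are assumed for each admissible $t$. For fiber injectivity it is enough to fix one $t$ per fiber.

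A second minor point is the passage from the function $c$ to a polynomial. We handle it by Lagrange interpolation on $\mu_3$. This is possible since $3\mid q-1$ makes $3$ invertible in $\Fq$. Interpolation does not change the values $h(x^s)$ for $x\in\Fqstar$, so Theorem~\ref{thm:zieve} applies to $X^rh(X^s)$, which has the same values as $f$ on $\Fq$.
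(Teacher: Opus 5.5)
Your proposal is correct and follows the paper's proof: Zieve's theorem with $d=3$ shows that $f$ permutes $\mathbb{F}_q$, and the AGW criterion with $A=\mathbb{F}_q^*$, $\lambda=\bar\lambda=\pi$, $\bar g=\bar\psi$, together with the fiber identity $F(tz)=t\,\varphi_u(z)$, shows that $F$ does. Your extra checks make explicit three points the paper leaves implicit: that $F$ maps $\mathbb{F}_q^*$ into itself (needed for $g\colon A\to A$), that $c$ can be replaced by an interpolating polynomial so that Zieve's theorem literally applies, and that nothing depends on the choice of $t$ (the paper addresses this last point in the remark after the theorem).
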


\begin{proof}
By~\ref{G1}, $\gcd(r,s)=1$ and $c(u)^s=1$ for all $u\in\mu_3$, so
the map $u\mapsto u^r c(u)^s=u^r$ is a permutation of~$\mu_3$ (since
$\gcd(r,3)=1$).  Zieve's Theorem~\ref{thm:zieve} with $d=3$ then
implies that $f$ is a PP of~$\Fq$.

We show that $F$ permutes~$\Fqstar$ by applying the AGW criterion
(Theorem~\ref{thm:agw}) with the following data:
\[
  A=\Fqstar, \quad S=\bar S=\mu_3, \quad
  \lambda=\bar\lambda=\pi, \quad g=F, \quad
  \bar g=\bar\psi.
\]
Since $\pi\colon\Fqstar\to\mu_3$ is a surjective group homomorphism,
the surjectivity hypothesis on $\lambda$ and $\bar\lambda$ is
satisfied.  It remains to establish the commutativity
$\pi\circ F=\bar\psi\circ\pi$ and the two requirements of
condition~(ii).

Fix $u\in\mu_3$ and choose $t\in\Fqstar$ with $t^s=u$.  Every element
of the fiber $\pi^{-1}(u)=tK$ can be written uniquely as $x=tz$
with $z\in K$, and a direct computation gives
\[
  F(tz) = (tz)^r c\!\left((tz)^s\right)+tz
        = t^r z^r c(u)+tz
        = tz\bigl(1+\beta_u\, z^{r-1}\bigr),
\]
where $\beta_u:=c(u)\,t^{r-1}\ne 0$.  Applying~$\pi$ to both sides:
\begin{equation}\label{eq:commut}
  \pi\bigl(F(tz)\bigr) = (tz)^s\bigl(1+\beta_u\, z^{r-1}\bigr)^s
                       = u\cdot\bigl(1+\beta_u\, z^{r-1}\bigr)^s.
\end{equation}
By~\ref{G3}, the factor $(1+\beta_u\,z^{r-1})^s=v(u)$ is independent
of $z\in K$, so~\eqref{eq:commut} gives
\[
  \pi\bigl(F(x)\bigr) = u\cdot v(u) = \bar\psi(u)
                       = \bar\psi\bigl(\pi(x)\bigr)
\]
for every $x\in\pi^{-1}(u)$.  Since $u\in\mu_3$ was arbitrary, the
commutativity $\pi\circ F=\bar\psi\circ\pi$ holds on all of~$\Fqstar$.

By~\ref{G4}, $\bar\psi\colon\mu_3\to\mu_3$ is a bijection.  For
the injectivity on fibers, note that $F(tz)=t\cdot\varphi_u(z)$
where $\varphi_u(z)=z(1+\beta_u\,z^{r-1})$.  Since $t\ne 0$, the
equality $F(tz_1)=F(tz_2)$ forces $\varphi_u(z_1)=\varphi_u(z_2)$,
hence $z_1=z_2$ by~\ref{G2}.  Thus $F$ is injective on~$\pi^{-1}(u)$
for each $u\in\mu_3$.

Both requirements of condition~(ii) in Theorem~\ref{thm:agw} are
satisfied, so $F$ is a bijection on~$\Fqstar$.  Since $F(0)=0$, the
map~$F$ permutes~$\Fq$.
\end{proof}

\begin{remark}\label{rem:independence}
Conditions~\ref{G2} and~\ref{G3} involve a choice of
representative~$t$ for the fiber $\pi^{-1}(u)$, but neither condition
depends on this choice.  Indeed, replacing~$t$ by $t'=tz_0$ with
$z_0\in K$ replaces $\beta_u$ by $\beta_u'=\beta_u\, z_0^{r-1}$ and
the map $\varphi_u(z)$ by $\varphi_u(z_0^{-1}z')$, which is injective
if and only if~$\varphi_u$ is.  Similarly, as $z$ ranges over~$K$, the
products $z_0^{r-1}z^{r-1}$ range over the same set as $z^{r-1}$, so
the independence in~\ref{G3} is preserved.
\end{remark}

\begin{remark}\label{rem:scalar-transition}
When $r\equiv 1\pmod{s}$, write $r-1=ks$.  Then for every $z\in K$ we
have $z^{r-1}=(z^s)^k=1$, so condition~\ref{G3} holds automatically
with $v(u)=(1+\beta_u)^s$, and~\ref{G2} reduces to
$1+\beta_u\ne 0$ (a simple non-vanishing).  This specialization is the
subject of the next section.
\end{remark}
\section{The scalar fiber specialization}\label{sec:scalar}

When $r\equiv 1\pmod{s}$, the general criterion of
Theorem~\ref{thm:cpp-general} simplifies considerably: the fiber maps
become scalar multiplications, and the four conditions
\ref{G1}--\ref{G4} reduce to elementary non-vanishing and permutation
checks.  This specialization is the most practical form of the
criterion, and is the one we apply in the examples of
Section~\ref{sec:examples}.

\begin{theorem}[Scalar fiber CPP theorem]\label{thm:scalar}
Let $q\equiv 1\pmod{9}$, let $s=(q-1)/3$, and write $r=1+ks$ for some
positive integer~$k$.  Let $c\colon\mu_3\to\Fqstar$ be a function with
$c(u)^s=1$ for all $u\in\mu_3$, and define
\[
  f(X) = X^r\, c(X^s), \qquad F(X) = f(X)+X.
\]
For each $u\in\mu_3$, set $\tau(u):=1+c(u)\,u^k$ and
$v(u):=\tau(u)^s$.  Assume:
\begin{enumerate}[label=\textup{(H\arabic*)},leftmargin=3em]
\item\label{H1} $\tau(u)\ne 0$ for all $u\in\mu_3$.
\item\label{H2} The map $\bar\psi\colon\mu_3\to\mu_3$ defined by
  $\bar\psi(u)=u\cdot v(u)$ is a permutation of~$\mu_3$.
\end{enumerate}
Then $f$ is a complete permutation
polynomial of\/~$\Fq$.
\end{theorem}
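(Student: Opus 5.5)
We deduce Theorem~\ref{thm:scalar} from Theorem~\ref{thm:cpp-general} by checking \ref{G1}--\ref{G4} for $r=1+ks$. Since $q\equiv 1\pmod 9$, $s$ is divisible by $3$.

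For \ref{G1}, note that $r=1+ks\equiv 1\pmod s$, so $\gcd(r,s)=1$. Since $3\mid s$, we also get $r\equiv 1\pmod 3$, hence $\gcd(r,3)=1$. The hypothesis $c(u)^s=1$ is assumed outright.

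Next we compute the fiber constant. Fix $u\in\mu_3$ and $t$ with $t^s=u$. Then
\[
\beta_u=c(u)\,t^{r-1}=c(u)\,t^{ks}=c(u)\,u^k .
\]
By Remark~\ref{rem:scalar-transition}, $z^{r-1}=(z^s)^k=1$ for every $z\in K$. Therefore
\[
\varphi_u(z)=z(1+\beta_u)=\tau(u)\,z .
\]
This is multiplication by the nonzero scalar $\tau(u)$, by \ref{H1}. It is injective, which gives \ref{G2}.

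For \ref{G3}, the quantity $(1+\beta_u z^{r-1})^s=\tau(u)^s=v(u)$ does not depend on $z$. It lies in $\mu_3$ because $v(u)^3=\tau(u)^{q-1}=1$, again using $\tau(u)\neq 0$. Finally, \ref{G4} is exactly \ref{H2}. Theorem~\ref{thm:cpp-general} then yields the conclusion.

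The only delicate point is where $q\equiv 1\pmod 9$ is used. It is needed precisely to force $\gcd(r,3)=1$ in \ref{G1}, since this is what makes $f$ itself a PP through Zieve's criterion. Without $9\mid q-1$, $s$ need not be divisible by $3$, and $r=1+ks$ could be divisible by $3$. In that case $u\mapsto u^r$ would collapse $\mu_3$ to a point, so $f$ would fail to permute $\mu_3$.

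A side check: $\beta_u$ is well defined independently of the choice of $t$. Indeed $t^{ks}=u^k$ depends only on $u$, so Remark~\ref{rem:independence} is not even needed here.
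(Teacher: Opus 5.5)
Your proposal is correct and follows the paper's proof. Like the paper, you reduce to Theorem~\ref{thm:cpp-general}, use $3\mid s$ to get $\gcd(r,3)=1$, and use $z^{r-1}=1$ on $K$ together with $t^{r-1}=u^k$ to turn each fiber map into multiplication by the nonzero scalar $\tau(u)$, so that \ref{G2}--\ref{G4} follow from \ref{H1}--\ref{H2}. Your explicit check that $v(u)^3=\tau(u)^{q-1}=1$ supplies a step the paper only asserts.
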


\begin{proof}
Since $q\equiv 1\pmod{9}$, we have $3\mid s$, hence
$r=1+ks\equiv 1\pmod{3}$, which gives $\gcd(r,3)=1$.  Moreover,
$\gcd(r,s)=\gcd(1+ks,s)=1$.  Together with the hypothesis $c(u)^s=1$
for all $u\in\mu_3$, we see that the map $u\mapsto u^r c(u)^s=u^r$ is
the identity on~$\mu_3$ (since $r\equiv 1\pmod{3}$), hence a
permutation.  Zieve's Theorem~\ref{thm:zieve} then implies that $f$ is
a PP of~$\Fq$.

We verify the conditions of Theorem~\ref{thm:cpp-general} for~$F$.
Fix $u\in\mu_3$ and $t\in\Fqstar$ with $t^s=u$.  For any $z\in K$,
the identity $z^{r-1}=z^{ks}=(z^s)^k=1$ simplifies the fiber
expression to
\[
  F(tz) = tz\bigl(1+c(u)\,t^{r-1}\bigr).
\]
Since $t^{r-1}=t^{ks}=(t^s)^k=u^k$, the scalar $\tau(u)=1+c(u)\,u^k$
is independent of both $z$ and the choice of representative~$t$, and is
nonzero by~\ref{H1}.  Thus
\[
  F(tz) = \tau(u)\cdot tz
\]
for every $z\in K$: the map~$F$ acts on the fiber $\pi^{-1}(u)$ as
multiplication by the nonzero constant~$\tau(u)$, which is manifestly
injective.  This verifies~\ref{G2}.

The identity $z^{r-1}=1$ on~$K$ makes~\ref{G3} automatic: for every
$z\in K$,
\[
  \bigl(1+\beta_u\,z^{r-1}\bigr)^s = \bigl(1+c(u)\,u^k\bigr)^s
  = \tau(u)^s = v(u),
\]
which is independent of~$z$ and lies in~$\mu_3$.

Finally, condition~\ref{G4} asks that
$\bar\psi(u)=u\cdot v(u)$ be a permutation of~$\mu_3$, which is
exactly~\ref{H2}.  By Theorem~\ref{thm:cpp-general}, $F$ is a
permutation polynomial of~$\Fq$.
\end{proof}

\begin{remark}\label{rem:mod9}
The hypothesis $q\equiv 1\pmod{9}$ is used only to ensure that
$3\mid s$, which in turn guarantees $\gcd(r,3)=1$.  If
$q\equiv 1\pmod{3}$ but $q\not\equiv 1\pmod{9}$, then $3\nmid s$, and
the congruence $r\equiv 1\pmod{s}$ no longer forces
$r\equiv 1\pmod{3}$.  The counterexamples in Section~\ref{sec:examples}
show that the construction can indeed fail in this range.
\end{remark}

\begin{corollary}\label{cor:family}
Under the hypotheses of Theorem~\textup{\ref{thm:scalar}}, suppose
that $v(u)=\tau(u)^s$ takes a constant value $\alpha\in\mu_3$ for all
$u\in\mu_3$.  Then conditions~\textup{\ref{H1}} and~\textup{\ref{H2}}
are satisfied, and $f(X)=X^r c(X^s)$ is a complete permutation
polynomial of\/~$\Fq$.
\end{corollary}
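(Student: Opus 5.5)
The argument is a direct reduction to Theorem~\ref{thm:scalar}: we only need to check \ref{H1} and \ref{H2} when $v(u)=\tau(u)^s$ is identically equal to some $\alpha\in\mu_3$.

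First, \ref{H1}. The hypothesis says that $\tau(u)^s=\alpha$ for every $u\in\mu_3$, and $\alpha\in\mu_3$ is nonzero. If some $\tau(u)$ were $0$, then $\tau(u)^s=0^s=0$ because $s\ge 1$, which contradicts $\tau(u)^s=\alpha\neq 0$. Hence $\tau(u)\neq 0$ for all $u\in\mu_3$. Stated this way, we read ``$v(u)$ takes a value in $\mu_3$'' as already forcing $\tau(u)\ne0$; this is the only mildly delicate point, and it disappears once one notes that $0^s=0\notin\mu_3$.

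Next, \ref{H2}. Since $v\equiv\alpha$, the induced map is $\bar\psi(u)=u\cdot v(u)=\alpha u$. This is multiplication by a fixed element $\alpha$ of the group $\mu_3$, i.e.\ a translation in the group. It is injective, because $\alpha u_1=\alpha u_2$ gives $u_1=u_2$ after multiplying by $\alpha^{-1}$. An injective self-map of the finite set $\mu_3$ is a bijection, so $\bar\psi$ permutes $\mu_3$.

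All the remaining hypotheses of Theorem~\ref{thm:scalar} hold by assumption: $q\equiv1\pmod 9$, $r=1+ks$, and $c(u)^s=1$. Theorem~\ref{thm:scalar} therefore yields that $f(X)=X^rc(X^s)$ is a complete permutation polynomial of~$\Fq$. I do not expect any real obstacle.
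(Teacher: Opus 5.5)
Your proposal is correct and is essentially the paper's own proof. You get (H1) from $\tau(u)^s=\alpha\neq 0$, and (H2) from $\bar\psi(u)=\alpha u$ being a rotation of $\mu_3$, after which Theorem~\ref{thm:scalar} applies; your explicit note that $q\equiv 1\pmod{9}$, $r=1+ks$ and $c(u)^s=1$ are inherited is a harmless clarification the paper leaves implicit.
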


\begin{proof}
Since $\alpha=\tau(u)^s\ne 0$, the scalar $\tau(u)$ is nonzero for
each $u\in\mu_3$, which is~\ref{H1}.  The induced map is
$\bar\psi(u)=u\cdot\alpha=\alpha u$, a rotation of the cyclic
group~$\mu_3$, hence a bijection.  This verifies~\ref{H2}, and the
result follows from Theorem~\ref{thm:scalar}.
\end{proof}

\section{Examples and counterexamples}\label{sec:examples}

In this section we illustrate the family criterion
(Corollary~\ref{cor:family}) with explicit CPP constructions, and then
demonstrate through counterexamples that the congruence
$q\equiv 1\pmod{9}$ is essential for our method.

Throughout, we fix $\delta\in\Fq$ with $\delta^2+\delta+1=0$ and take
$c(u)=u^2+\delta u+\delta^2+1$, so that $\delta$ plays the role
of~$\omega$ and $\mu_3=\{1,\delta,\delta^2\}$.  From the proof of
Theorem~\ref{thm:bbd3}(1),
\[
  c(1)=1, \qquad c(\delta^2)=1, \qquad c(\delta)=3\delta^2+1,
\]
and $c(u)^s=1$ for all $u\in\mu_3$ whenever $(3\delta^2+1)^s=1$.  We
work in the scalar fiber case $r=1+ks$ and define
$f(X)=X^r c(X^s)$ and $F(X)=f(X)+X$.  For each $u\in\mu_3$, we write
$\tau(u)=1+c(u)\,u^k$ and $v(u)=\tau(u)^s$ as in
Theorem~\ref{thm:scalar}.

\subsection{CPP examples}
For each example, we verify that $\tau(u)\ne 0$ for all $u\in\mu_3$
and that $v(u)$ is constant on~$\mu_3$, so that
$\bar\psi(u)=u\,v(u)$ is a rotation of~$\mu_3$
(Corollary~\ref{cor:family}).

\medskip
\noindent\textbf{(a)\; $q=109$, $s=36$, $\delta=63$, $k=2$
($r=73$).}\;
One checks $63^2+63+1=4033=37\times 109\equiv 0$, confirming
$\delta\in\mu_3$, with $\delta^2\equiv 45\pmod{109}$.
We compute $c(\delta)=3\cdot 45+1\equiv 27\pmod{109}$
and verify $27^{36}\equiv 1\pmod{109}$, so
$c(u)^s=1$ for all $u\in\mu_3$.

Since $k=2$, the cubes of unity satisfy $1^2=1$, $\delta^2\equiv 45$,
$(\delta^2)^2=\delta^4=\delta\equiv 63$, so:
\begin{align*}
  \tau(1)      &= 1+c(1)\cdot 1^2        = 1+1     = 2,  \\
  \tau(\delta) &= 1+c(\delta)\cdot\delta^2 = 1+27\cdot 45
                \equiv 1+16 = 17 \pmod{109},\\
  \tau(\delta^2) &= 1+c(\delta^2)\cdot\delta = 1+63 = 64.
\end{align*}
All three values are nonzero, verifying~\ref{H1}.  A direct computation
gives $v(u)=\tau(u)^{36}\equiv 1\pmod{109}$ for each $u\in\mu_3$, so
$\bar\psi(u)=u\cdot 1=u$ is the identity on~$\mu_3$.  By
Corollary~\ref{cor:family}, $f(X)=X^{73}\,c(X^{36})$ is a complete
permutation polynomial of~$\mathbb{F}_{109}$.

\medskip
\noindent\textbf{(b)\; $q=163$, $s=54$, $\delta=58$, $k=3$
($r=163$).}\;
We have $58^2+58+1=3423=21\times 163\equiv 0$, with
$\delta^2\equiv 104\pmod{163}$.  Compute
$c(\delta)=3\cdot 104+1\equiv 150\pmod{163}$ and verify
$150^{54}\equiv 1\pmod{163}$.

Since $k=3$ and $u^3=1$ for all $u\in\mu_3$, we have
$\tau(u)=1+c(u)$ for each~$u$:
\begin{align*}
  \tau(1)        &= 1+c(1)=2, \\
  \tau(\delta)   &= 1+c(\delta)=1+150=151, \\
  \tau(\delta^2) &= 1+c(\delta^2)=2.
\end{align*}
All three values are nonzero.  A direct computation gives
$v(u)=\tau(u)^{54}\equiv 104\equiv\delta^2\pmod{163}$ for each
$u\in\mu_3$.  Since $v$ is constant with value $\delta^2\in\mu_3$,
the map $\bar\psi(u)=\delta^2 u$ is a nontrivial rotation of~$\mu_3$:
\[
  \bar\psi(1)=\delta^2, \qquad
  \bar\psi(\delta)=\delta^3=1, \qquad
  \bar\psi(\delta^2)=\delta^4=\delta.
\]
By Corollary~\ref{cor:family},
$f(X)=X^{163}\,c(X^{54})$ is a complete permutation polynomial
of~$\mathbb{F}_{163}$.

\medskip
\noindent\textbf{(c)\; $q=199$, $s=66$, $\delta=106$, $k=3$
($r=199$).}\;
We have $106^2+106+1=11343=57\times 199\equiv 0$, with
$\delta^2\equiv 92\pmod{199}$.  Compute
$c(\delta)=3\cdot 92+1\equiv 78\pmod{199}$ and verify
$78^{66}\equiv 1\pmod{199}$.

Again $k=3$ and $u^3=1$, so $\tau(u)=1+c(u)$:
\begin{align*}
  \tau(1)        &= 2, \\
  \tau(\delta)   &= 1+78=79, \\
  \tau(\delta^2) &= 2.
\end{align*}
All three values are nonzero.  A direct computation gives
$v(u)=\tau(u)^{66}\equiv 106\equiv\delta\pmod{199}$ for each
$u\in\mu_3$.  Since $v$ is constant with value $\delta\in\mu_3$,
the map $\bar\psi(u)=\delta u$ is a nontrivial rotation:
\[
  \bar\psi(1)=\delta, \qquad
  \bar\psi(\delta)=\delta^2, \qquad
  \bar\psi(\delta^2)=\delta^3=1.
\]
By Corollary~\ref{cor:family},
$f(X)=X^{199}\,c(X^{66})$ is a complete permutation polynomial
of~$\mathbb{F}_{199}$.

\subsection{Counterexamples outside $q\equiv 1\pmod{9}$}
We show that the construction can fail when $q\equiv 1\pmod{3}$ but
$q\not\equiv 1\pmod{9}$, even when~$r$ satisfies all coprimality
conditions.  In each case we use the same structural shape
$F(X)=X^r(X^{2s}+\delta X^s+\delta^2+1)+X$ with
$\delta^2+\delta+1=0$.

\medskip
\noindent\textbf{Counterexample~1: $q=7$.}\;
Here $7\equiv 1\pmod{3}$ but $7\not\equiv 1\pmod{9}$.  We have $s=2$
and $\delta=2$ (since $4+2+1=7\equiv 0$), giving
$\delta^2+1\equiv 5\pmod{7}$.  Taking $r=1$ (which satisfies
$\gcd(1,6)=1$):
\[
  F(X) = X(X^4+2X^2+5)+X = X^5+2X^3+6X = X(X^4+2X^2+6).
\]
For $x\in\{3,4\}$, we have $x^2\equiv 2\pmod{7}$, so the inner
factor evaluates to $4+4+6=14\equiv 0\pmod{7}$.  Therefore
$F(0)=F(3)=F(4)=0$: three distinct elements map to~$0$,
and $F$ is not a permutation of~$\mathbb{F}_7$.

\medskip
\noindent\textbf{Counterexample~2: $q=31$.}\;
Here $31\equiv 4\pmod{9}$, so $31\not\equiv 1\pmod{9}$.  We have $s=10$
and $\delta=25$ (since $625+25+1=651=21\times 31\equiv 0$), with
$\delta^2\equiv 5$ and $\delta^2+1\equiv 6\pmod{31}$.  Taking $r=7$,
which satisfies $\gcd(7,30)=1$, $\gcd(7,10)=1$, and $\gcd(7,3)=1$:
\[
  F(X) = X^7(X^{20}+25X^{10}+6)+X.
\]
Since $2^5=32\equiv 1\pmod{31}$ and $8=2^3$, we have
$8^5=2^{15}=(2^5)^3\equiv 1$, hence $8^{10}\equiv 1$ and
$8^{20}\equiv 1$.  The inner factor at $x=8$ evaluates to
$1+25+6=32\equiv 1\pmod{31}$, so
\[
  F(8) = 8^7\cdot 1+8.
\]
Now $8^7=8^5\cdot 8^2\equiv 1\cdot 64\equiv 2\pmod{31}$, giving
$F(8)\equiv 2+8=10\pmod{31}$.

For $x=5$: we compute $5^2=25$, $5^4\equiv 625\equiv 5\pmod{31}$,
so $5^{10}=(5^5)^2=(5^4\cdot 5)^2=25^2\equiv 5$ and
$5^{20}=(5^{10})^2=25\pmod{31}$.  The inner factor evaluates to
$25+25\cdot 5+6=156\equiv 1\pmod{31}$, and
$5^7=5^4\cdot 5^2\cdot 5=5\cdot 25\cdot 5=625\equiv 5\pmod{31}$.
Therefore $F(5)=5\cdot 1+5=10$.

Since $F(5)\equiv F(8)\equiv 10$ with $5\ne 8$, the map~$F$ is not
injective.

\medskip
These counterexamples do not claim that no CPPs exist outside
$q\equiv 1\pmod{9}$; rather, they show that the natural parameter
choices used in our constructions can fail in that range, and the
congruence $q\equiv 1\pmod{9}$ is genuinely used by our criteria.

\bigskip

\noindent\textsc{Chahrazade Bouyacoub},
Department of Mathematics, LA3C Laboratory
\par\noindent\textit{e-mail}: \texttt{chahrazadelamia.bouloudene@usthb.edu.dz}

\medskip

\noindent\textsc{Asmae El-Baz},
Département de mathématiques et de statistique,
Université Laval, Québec, QC G1V 0A6, Canada
\par\noindent\textit{e-mail}: \texttt{asmae.el-baz.1@ulaval.ca}

\medskip

\noindent\textsc{Omar Kihel},
Department of Mathematics, Brock University,
Ontario, Canada L2S 3A1
\par\noindent\textit{e-mail}: \texttt{okihel@brocku.ca}
\end{document}